\documentclass[11pt]{article}

\usepackage[all]{xy}
\usepackage{fullpage}
\usepackage{amssymb}
\usepackage{amsmath}
\usepackage{amsthm}
\usepackage[all]{xy}
\usepackage{graphicx}
\usepackage{enumitem}
\usepackage{authblk}
\usepackage{hyperref}

\title{A note on the atomicity of arithmeticity}
\author[a,d,e]{Michael Hoefnagel}
\author[b,c,d]{Pierre-Alain Jacqmin\thanks{The second author is grateful to the FNRS for its generous support.}}
\affil[a]{\small{\textit{Mathematics Division, Department of Mathematical Sciences, Stellenbosch University, Private Bag X1 Matieland 7602, South Africa}}}
\affil[b]{\small{\textit{Institut de Recherche en Math\'ematique et Physique, Universit\'e catholique de Louvain, Chemin du Cyclotron~2, B 1348 Louvain-la-Neuve, Belgium}}}
\affil[c]{\small{\textit{Department of Mathematics, Royal Military Academy, Rue Hobbema~8, B 1000 Brussels, Belgium}}}
\affil[d]{\small{\textit{Centre for Experimental Mathematics, Department of Mathematical Sciences, Stellenbosch University, Private Bag X1 Matieland 7602, South Africa}}}
\affil[e]{\small{\textit{National Institute for Theoretical and Computational Sciences (NITheCS), South Africa}}}
\date{}

\newcommand{\lex}{\mathsf{lex}}
\newcommand{\reg}{\mathsf{reg}}
\newcommand{\C}{\mathbb{C}}
\renewcommand{\c}{\sqsubset}
\renewcommand{\implies}{\Rightarrow}
\newcommand{\equivalent}{\Leftrightarrow}
\newcommand{\Mal}{\mathsf{Mal}}
\newcommand{\Maj}{\mathsf{Maj}}
\newcommand{\Ari}{\mathsf{Ari}}
\newcommand{\mclex}{\mathsf{mclex}}
\newcommand{\mcreg}{\mathsf{mcreg}}
\newcommand{\Mclex}{\mathsf{Mclex}}
\newcommand{\Mcreg}{\mathsf{Mcreg}}
\newcommand{\Set}{\mathsf{Set}}
\newcommand{\Bool}{\mathsf{Bool}}
\newcommand{\op}{\mathsf{op}}
\newcommand{\matr}{\mathsf{matr}}
\newcommand{\M}{\mathsf{M}}
\newcommand{\N}{\mathsf{N}}

\providecommand{\keywords}[1]
{
  \small	
  \textbf{\textit{Keywords---}} #1
}

\providecommand{\classification}[1]
{
  \small	
  \textbf{\textit{2020 Mathematics Subject Classification---}} #1
}

\newtheorem{theorem}			     {Theorem}	    [section]
\newtheorem{proposition}  [theorem]	 {Proposition}
\newtheorem{corollary}	  [theorem]	 {Corollary}

\theoremstyle{definition}

\newtheorem{remark} 	  [theorem]  {Remark}

\newdir{ >}{@{}*!/-10pt/@{>} }

\begin{document}

\maketitle

\begin{abstract}
The main aim of this note is to show that, in the regular context, every matrix property in the sense of~\cite{Janelidze06} either implies the Mal'tsev property, or is implied by the majority property. When the regular category $\C$ is arithmetical, i.e., both Mal'tsev and a majority category, then we show that $\C$ satisfies every non-trivial matrix property.
\end{abstract}

\keywords{Arithmetical category, Mal'tsev category, majority category, matrix property, Mal'tsev condition.}

\classification{
18E13, % Protomodular categories, semi-abelian categories, Mal’tsev categories
08B05, % Equational logic, Mal’tsev conditions
18E08, % Regular categories, Barr-exact categories
08A05, % Structure theory of algebraic structures
06-08. % Computational methods for problems pertaining to ordered structures
}

\section{Introduction}

Consider an extended matrix of variables
$$\M=\left[ \begin{array}{ccc|c}
x_{11} & \cdots & x_{1m} & y_{1} \\
\vdots &        & \vdots & \vdots\\
x_{n1} & \cdots & x_{nm} & y_{n}\end{array} \right]$$
where the $x_{ij}$'s and the $y_i$'s are (not necessarily distinct) variables from $\{x_1,\dots,x_k\}$. We refer to the last column (the column of $y_i$'s in~$\M$) as the \emph{right column} of~$\M$, and every other column as a \emph{left column}. A \emph{row-wise interpretation} of $\M$ of type $(X_1,\dots, X_n)$ is a matrix of the form 
$$\left[ \begin{array}{ccc|c}
f_1(x_{11}) & \cdots & f_1(x_{1m}) & f_1(y_{1}) \\
\vdots      &   	 & \vdots      & \vdots     \\
f_n(x_{n1}) & \cdots & f_n(x_{nm}) & f_n(y_{n})\end{array} \right]$$
where the $f_i\colon\{x_1,\dots,x_k\} \rightarrow X_i$ are  functions. Given any relation $R \subseteq X_1 \times \cdots \times X_n$, we say that it is strictly $\M$-closed if for every row-wise interpretation $N$ of $\M$ of type $(X_1,\dots,X_n)$ if the left columns of $N$ are elements of $R$ then the right column of $N$ is also an element of~$R$. This set-theoretic property of relations can be internalised (via the Yoneda embedding) in any finitely complete category~$\C$, so that $\C$ is then said to have $\M$-closed relations if every internal relation in $\C$ is strictly $\M$-closed. We will also refer to the property of $\M$-closedness of internal relations in $\C$ as simply the \emph{matrix property}~$\M$.

Let us formulate this property in a way which we will use for the remainder of this paper. Given two morphisms $f$ and $g$ in a category $\C$ with the same codomain, we will write $f \c g$ if $f$ factors through~$g$. This relation on morphisms in $\C$ defines a \emph{cover relation} in the sense of \cite{Janelidze08, Janelidze09}. Then we can reformulate the matrix property $\M$ of a finitely complete category $\C$ in the following way: given any internal relation $r\colon R \rightarrowtail X_1 \times \cdots \times X_n$ in $\C$ and any row-wise interpretation 
$$N = \left[ \begin{array}{ccc|c}
c_1 & \cdots & c_m & y
\end{array} \right]$$
of the matrix $\M$ of type $(\hom(S,X_1), \dots, \hom(S,X_n))$ where the $c_i$'s are the left columns of $N$ and the $y$ is the right column of $N$ (viewed as morphisms $S \rightarrow X_1 \times \cdots \times X_n$), then, if for every $i \in \{1,\dots,m\}$ we have $c_i \c r$, then we have $y \c r$.

The first and most well-known example of a matrix property is the property of a finitely complete category to be a Mal'tsev category~\cite{CPP92,CLP91}, since a category $\C$ with finite limits is a Mal'tsev category if and only if every internal relation is \emph{difunctional} \cite{Riguet48}, i.e., every internal relation is strictly $\Mal$-closed where
$$\Mal = \left[ \begin{array}{ccc|c}
x_1 & x_2 & x_2 & x_1 \\
x_2 & x_2 & x_1 & x_1
\end{array} \right].$$

Similarly to what was done in~\cite{HJ22}, given integers $n,k > 0$ and $m\geqslant 0$, we write $\matr(n,m,k)$ for the set of all matrices with $n$ rows, $m+1$ columns and whose entries are in the set $\{x_1,\dots,x_k\}$. Then $\matr$ is the union of all such $\matr(n,m,k)$ for $n,k > 0$ and $m\geqslant 0$. Corresponding to a matrix $\M \in \matr$, we will write $\mclex\{\M\}$ for the collection of finitely complete categories which satisfy the matrix property~$\M$, and refer to these collections as \emph{matrix classes}. The collection of all such matrix classes is then denoted by $\Mclex$, i.e.,
$$\Mclex = \{\mclex\{\M\}\mid \M \in \matr\}$$
and it has a poset structure given by inclusion of matrix classes. We will also write $\Mclex[n,m,k]$ for the sub-poset of $\Mclex$ of matrix classes $\mclex\{\M\}$ determined by a matrix $\M$ in $\matr(n,m,k)$. Among the elements of the poset $\Mclex$ are two \emph{trivial} ones, i.e., the matrix class of preorders with a single isomorphism class and the matrix class of finitely complete preorders. These are respectively the bottom element and the unique atom of $\Mclex$. They are determined by the so-called \emph{trivial matrices} (see~\cite{HJJ22}). The top element of $\Mclex$, i.e., the matrix class of all finitely complete categories is called the \emph{anti-trivial} element and is determined by the so-called \emph{anti-trivial matrices}. The \emph{degenerate matrix classes} (respectively the \emph{degenerate matrices}) are the ones which are either trivial or anti-trivial.

As another example of a collection of categories determined by a matrix property, consider the matrix property corresponding to the matrix $\Maj$ where
$$\Maj = \left[ \begin{array}{ccc|c}
x_1 & x_1 & x_2 & x_1\\
x_1 & x_2 & x_1 & x_1\\
x_2 & x_1 & x_1 & x_1
\end{array} \right].$$
Then, $\mclex\{\Maj\}$ is the collection of all finitely complete majority categories~\cite{Hoefnagel19}. For another example, consider the matrix $\Ari$ where
$$\Ari = \left[ \begin{array}{ccc|c}
x_1 & x_2 & x_2 & x_1\\
x_2 & x_2 & x_1 & x_1\\
x_1 & x_2 & x_1 & x_1
\end{array} \right].$$
Then $\mclex\{\Ari\}$ is the collection of all finitely complete arithmetical categories as defined in~\cite{HJJ22}. Note that in the Barr-exact context~\cite{BGO71} with coequalisers this matrix property determines arithmetical categories as introduced first in~\cite{Pedicchio96} and later generalised to wider contexts in~\cite{Bourn01,GRT20}.

Some matrix properties imply others, which is to say that some matrix classes are contained (as sub-collections) in other matrix classes. For example, we have that $\mclex\{\Ari\} \subseteq \mclex\{\Maj\}$ and also $\mclex\{\Ari\} \subseteq \mclex\{\Mal\}$. Thus, the general question: is there a procedure for determining when a given matrix class contains another? This question has been recently answered in the paper~\cite{HJJ22}, where an algorithm was given which determines inclusions of the form $\mclex\{\N\} \subseteq \mclex\{\M\}$, i.e., which determines implications of matrix properties. Computer implementation of this algorithm allows us to determine, for relatively small $n,m,k$, the posets $\Mclex[n,m,k]$. For instance, Figure~\ref{figure 3,7,2} (which describes the same poset as Figure~2 in~\cite{HJJ22} and Figure~1 in~\cite{HJJW23}) gives a visual depiction of the poset of non-degenerate elements of $\Mclex[3,7,2]$ as obtained by the computer, where each integer entry $i$ corresponds to the variable $x_i$ and the shaded column is the right column in the representing matrix. Note that most of these matrix classes are not represented here by $3\times (7+1)$ matrices but up to duplication of rows and left columns, they can be turned to such matrices. One of the main results of~\cite{HJJW23} shows that among the non-trivial matrix classes in $\Mclex$ represented by a matrix with at most two variables (i.e., binary matrices) the matrix class $\mclex\{\Ari\}$ is the least. In fact, Figure~\ref{figure 3,7,2} already illustrates this fact, as the matrix class farthest to the left is $\mclex\{\Ari\}$. However, in $\Mclex$ we have non-trivial matrix classes which are strictly contained in $\mclex\{\Ari\}$.
\begin{figure}[htb!] 
	\centering
	\includegraphics[width=450pt]{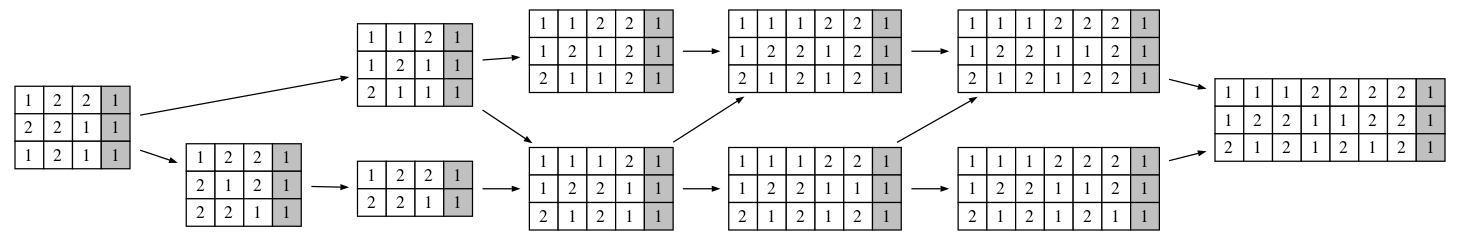}
	\caption{The poset of non-degenerate elements of $\mathsf{Mclex}[3,7,2]$. The matrix furthest to the left represents the matrix class $\mclex\{\Ari\}$ and the only matrix with two rows represents $\mclex\{\Mal\}$. The matrix which is just above this latter matrix represents the matrix class $\mclex\{\Maj\}$.}
	\label{figure 3,7,2}
\end{figure}

Let us now write $\mcreg\{\M\}$ for the collection of all regular categories~\cite{BGO71} satisfying the matrix property~$\M$, i.e., $\mcreg\{\M\}$ is the intersection of $\mclex\{\M\}$ and the collection of all regular categories. Such classes of regular categories we refer to as \emph{regular} matrix classes. We may then consider the poset $\Mcreg$ of all regular matrix classes ordered by inclusion, and ask the analagous question: is there an algorithm for determining whether or not $\mcreg\{\M\} \subseteq \mcreg\{\N\}$? As it stands, this question is still open, and it is known that $\mcreg\{\M\} \subseteq \mcreg\{\N\}$ need not imply that $\mclex\{\M\} \subseteq \mclex\{\N\}$ --- see Section~5 of~\cite{HJJ22}. 

In this paper we will show that the regular matrix classes corresponding to $\Mal$, $\Maj$ and $\Ari$ play a special role in $\Mcreg$. For one thing, we will show that every regular matrix class in $\Mcreg$ is either contained in $\mcreg\{\Mal\}$ or contains $\mcreg\{\Maj\}$. We will also show that among the non-trivial members of $\Mcreg$, the least is the regular matrix class corresponding to~$\Ari$. While, as shown in~\cite{HJJW23}, both results extend to the finitely complete context for binary matrices, we know they do not extend to that context in full generality.

\subsubsection*{Notation}

In order to simplify notation, we will borrow the notation of~\cite{HJ23}, and write $\M \implies_{\lex} \N$ if $\mclex\{\M\} \subseteq \mclex\{\N\}$ and likewise write $\M \implies_{\reg} \N$ if $\mcreg\{\M\} \subseteq \mcreg\{\N\}$.

\section{A strong majority property}

Given a category $\C$ with binary products, in what follows we will write $\pi_{i,j}$ for the two-fold projection $(\pi_i, \pi_j)\colon X_1 \times \cdots \times X_n \rightarrow X_i \times X_j$ determined by $\pi_i$ and~$\pi_j$, where $i,j \in \{1,2,\dots,n\}$. For a natural number $n \geqslant 3$, we define the following property on a finitely complete category~$\C$.

\begin{description}
\item[$(M_n)$] For any morphism $y\colon S \rightarrow X_1 \times \cdots \times X_n$ and any monomorphism $r\colon R \rightarrowtail X_1 \times \cdots \times X_n$, if $\pi_{i,j}y\c \pi_{i,j}r$ for any $i,j\in\{1,2,\dots,n\}$, then $y\c r$.
\end{description}
Note that for such $y\colon S \rightarrow X_1 \times \cdots \times X_n$, $r\colon R \rightarrowtail X_1 \times \cdots \times X_n$ and $i,j\in\{1,\dots,n\}$, if $\pi_{i,j}y\c \pi_{i,j}r$, then $\pi_{j,i}y\c \pi_{j,i}r$ and $\pi_{i,i}y\c \pi_{i,i}r$; so that in the above description of $(M_n)$, it is equivalent to ask $\pi_{i,j}y\c \pi_{i,j}r$ only for all $i,j\in\{1,\dots,n\}$ with $i<j$. As we will see shortly, the property $(M_n)$ for any integer $n\geqslant3$ is equivalent to a matrix property. Define a matrix $\M_n$ with $n$ rows, $m=\binom{n}{2}$ left columns $c_{i,j}$ indexed by all pairs of integers $(i,j)$ where $1 \leqslant i<j \leqslant n$ and whose right column is the column vector containing only the variable~$x_1$. Order the left columns $c_{i,j} < c_{i',j'}$ from left to right according to the lexicographic order $(i,j) < (i',j')$ on $\mathbb{N}^2$. In each column $c_{i,j}$ place a $x_1$ at the $i^{\textrm{th}}$ and $j^{\textrm{th}}$ entry. Then in each row, insert the variables $x_2,\dots, x_k$ (where $k = \binom{n-1}{2}+1$) in increasing order (of index) at each position which does not contain a~$x_1$. For example, in the case $n=3$, the matrix $M_3$ is nothing but
$$\M_3=\Maj = \left[ \begin{array}{ccc|c}
x_1 & x_1 & x_2 & x_1\\
x_1 & x_2 & x_1 & x_1\\
x_2 & x_1 & x_1 & x_1
\end{array} \right]$$
as defined in the Introduction. In the case $n=4$, we have
$$\M_4 = \left[ \begin{array}{cccccc|c}
x_1 & x_1 & x_1 & x_2 & x_3 & x_4 & x_1\\
x_1 & x_2 & x_3 & x_1 & x_1 & x_4 & x_1\\
x_2 & x_1 & x_3 & x_1 & x_4 & x_1 & x_1\\ 
x_2 & x_3 & x_1 & x_4 & x_1 & x_1 & x_1
\end{array} \right].$$

\begin{proposition}\label{proposition: Mn}
Let $n\geqslant 3$ be an integer. A finitely complete category $\C$ satisfies $(M_n)$ if and only if $\C$ has $\M_n$-closed relations, i.e., $\C$ satisfies the matrix property corresponding to~$\M_n$. 
\end{proposition}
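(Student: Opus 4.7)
I would establish the equivalence by unwinding both directions and leveraging the combinatorial design of~$\M_n$.

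\emph{From $(M_n)$ to $\M_n$-closedness.} Given an internal relation $r\colon R \rightarrowtail X_1 \times \cdots \times X_n$ and a row-wise interpretation of~$\M_n$ of type $(\hom(S,X_1),\dots,\hom(S,X_n))$, determined by functions $f_l\colon \{x_1,\dots,x_k\} \to \hom(S,X_l)$, with each left column factoring through~$r$, I would set $y := (f_1(x_1),\dots,f_n(x_1))\colon S\to X_1\times\cdots\times X_n$ and note that the right column of the interpretation is precisely~$y$. Since column $c_{i,j}$ of $\M_n$ carries $x_1$ in exactly the rows $i$ and $j$, applying $\pi_{i,j}$ to the interpretation of~$c_{i,j}$ yields $\pi_{i,j} y$; composing the factorisation through~$r$ with $\pi_{i,j}$ then delivers $\pi_{i,j} y \sqsubset \pi_{i,j} r$ for every $1 \leqslant i < j \leqslant n$. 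The hypothesis $(M_n)$ then gives $y \sqsubset r$.

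\emph{From $\M_n$-closedness to $(M_n)$.} Suppose $y$ and $r$ are as in $(M_n)$ and $\pi_{i,j} y \sqsubset \pi_{i,j} r$ holds for all $1 \leqslant i < j \leqslant n$; I would pick for each such pair a factorisation $h_{i,j}\colon S \to R$ with $\pi_{i,j} r h_{i,j} = \pi_{i,j} y$. I then build the required row-wise interpretation by declaring $f_l(x_1) := \pi_l y$ and, for each $s \in \{2,\dots,k\}$, $f_l(x_s) := \pi_l r h_{i,j}$, where $(i,j)$ is the unique pair with $i < j$ such that row~$l$, column $c_{i,j}$ of~$\M_n$ contains~$x_s$. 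With $f_l$ thus well-defined, a direct component check shows that the interpretation of column $c_{i,j}$ coincides with $r h_{i,j}$: for $l \in \{i,j\}$ one has $\pi_l r h_{i,j} = \pi_l y = f_l(x_1)$, and for $l \notin \{i,j\}$ this is the very definition of $f_l(x_s)$ at the relevant~$s$. Consequently every left column factors through~$r$, and the $\M_n$-closedness hypothesis forces the right column~$y$ to factor through~$r$.

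\emph{Main obstacle.} The only genuinely subtle point is the well-definedness of the $f_l$'s in the converse direction, which rests on the combinatorial fact that each variable $x_s$ with $s \geqslant 2$ occurs exactly once in every row of~$\M_n$. This is guaranteed by the recipe producing~$\M_n$: row~$l$ has $x_1$ in exactly the $n-1$ columns $c_{i,j}$ with $l \in \{i,j\}$, so the remaining $\binom{n-1}{2} = k-1$ positions of row~$l$ must receive the variables $x_2,\dots,x_k$ each exactly once. Once this bookkeeping is verified, both directions of the proposition fall out immediately.
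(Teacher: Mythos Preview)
Your proof is correct and follows essentially the same approach as the paper: both directions use that the $(i,j)$-th left column of~$\M_n$ has $x_1$ precisely in rows $i$ and~$j$, so that projecting by~$\pi_{i,j}$ identifies that column's interpretation with~$\pi_{i,j}y$. Your converse is actually more explicit than the paper's, which simply declares the matrix with left columns $r h_{i,j}$ and right column~$y$ to be a row-wise interpretation of~$\M_n$; you unpack this by defining the~$f_l$'s and checking well-definedness via the once-per-row occurrence of each $x_s$ with $s\geqslant 2$, which is exactly the hidden verification the paper leaves to the reader.
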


\begin{proof}
Suppose that $\C$ satisfies $(M_n)$ and let $r\colon R\rightarrowtail X_1 \times \cdots \times X_n$ be any monomorphism and 
$$M'=\left[ \begin{array}{ccc|c}
x_{11} & \cdots & x_{1m} & y_{1} \\
\vdots &  		& \vdots & \vdots\\
x_{n1} & \cdots & x_{nm} & y_{n}\end{array}\right]$$
be a row-wise interpretation of the matrix $\M_n$ of type $(\hom(S,X_1), \dots, \hom(S, X_n))$ where $m = \binom{n}{2}$. Each column of the matrix $M'$ above determines a (unique) morphism $S \rightarrow X_1 \times \cdots \times X_n$, and we will write $c'_{i,j}\colon S \rightarrow X_1 \times \cdots \times X_n$ for the morphism corresponding to the left column $c_{i,j}$ of~$\M_n$. We write $y=(y_1,\dots,y_n)\colon S \rightarrow X_1 \times \cdots \times X_n$ for the morphism determined by the right column of~$M'$. We suppose that each $c'_{i,j}$ factorises through $r$ and we must show that $y$ also does. As earlier, we write $\pi_{i,j}\colon X_1 \times \cdots \times X_n \rightarrow X_i \times X_j$ for the two-fold projection determined by $\pi_i$ and~$\pi_j$. It may then be seen that $\pi_{i,j} c'_{i,j} = (y_i, y_j)$, since $M'$ is a row-wise interpretation of~$\M_n$. Therefore $\pi_{i,j}y=\pi_{i,j} c'_{i,j} \c \pi_{i,j}r$ for all $i,j \in \{1,2,\dots,n\}$ with $i<j$ so that $y \c r$ by~$(M_n)$. 

Conversely, suppose that $\C$ has $\M_n$-closed relations and that we are given a monomorphism $r\colon R \rightarrowtail X_1 \times \cdots \times X_n$ and a morphism $y=(y_1,\dots,y_n)\colon S \rightarrow X_1 \times \cdots \times X_n$ such that $\pi_{i,j}y \c \pi_{i,j}r$ for each $i,j\in\{1,2,\dots,n\}$. Thus, there are factorisations $f'_{i,j}\colon S \rightarrow R$ such that $\pi_{i,j}r f'_{i,j} = (y_i, y_j)$. Form the matrix $M'$ whose left columns are determined by the morphisms $c'_{i,j} = rf'_{i,j}$ (again ordering them via the lexicographic order on $\mathbb{N}^2$) and whose right column is determined by the morphism~$y$. This matrix $M'$ is then a row-wise interpretation of $\M_n$ of type $(\hom(S,X_1), \dots, \hom(S,X_n))$. Since $\C$ has $\M_n$-closed relations, we deduce that $y \c r$.
\end{proof}

The matrix $\M_4$ is identical (up to replacement of variables) to the matrix in Remark~2.4 of~\cite{HJJW23}. For this reason, we know we do \emph{not} have $\M_3 \implies_{\lex} \M_4$ although we do have $\M_4 \implies_{\lex} \M_3$. We can actually generalise this.

\begin{proposition}\label{proposition: strict implications lex context}
We have a sequence of strict implications
$$\cdots \implies_{\lex}\M_{n+1} \implies_{\lex} \M_n \implies_{\lex} \cdots \implies_{\lex} \M_5 \implies_{\lex} \M_4 \implies_{\lex} \M_3$$
and all these matrices are non-degenerate.
\end{proposition}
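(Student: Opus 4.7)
My plan is to split the proposition into three subclaims: the forward implications $\M_{n+1}\implies_{\lex}\M_n$, their strictness, and the non-degeneracy of each matrix~$\M_n$.

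First I would establish $\M_{n+1}\implies_{\lex}\M_n$ via a duplication trick. Given a finitely complete category $\C$ satisfying $(M_{n+1})$, a monomorphism $r\colon R\rightarrowtail X_1\times\cdots\times X_n$ and a morphism $y\colon S\to X_1\times\cdots\times X_n$ with $\pi_{i,j}y\c\pi_{i,j}r$ for every $i<j$ in $\{1,\dots,n\}$, I would form $r'=(r,\pi_n r)\colon R\to X_1\times\cdots\times X_n\times X_n$ and $y'=(y,\pi_n y)\colon S\to X_1\times\cdots\times X_n\times X_n$. Then $r'$ is still a monomorphism (its first $n$ components recover $r$), and for every $i<j$ in $\{1,\dots,n+1\}$ one verifies $\pi_{i,j}y'\c\pi_{i,j}r'$: when $j\leqslant n$ this is the hypothesis, and when $j=n+1$ both sides reduce via the identification $\pi_{n+1}=\pi_n$ to $\pi_{i,n}y\c\pi_{i,n}r$. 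Applying $(M_{n+1})$ then gives $y'\c r'$, and projecting onto the first $n$ coordinates yields $y\c r$.

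For the non-degeneracy of $\M_n$, it suffices to check that $\mclex\{\M_n\}$ is neither the trivial class of preorders with a single isomorphism class nor the anti-trivial class of all finitely complete categories. The latter follows from the fact that $\Set$ is not a majority category (for instance, the even-parity subset of $\{0,1\}^3$ has all three $2$-projections equal to $\{0,1\}^2$ but omits $(0,0,1)$), combined with $\M_n\implies_{\lex}\M_3$. The former follows from the existence of non-preorder arithmetical varieties such as Boolean rings, which satisfy every non-trivial matrix property as a known consequence of Pixley's characterization of arithmetical varieties via a ternary term $q$ with $q(x,y,y)=q(y,y,x)=q(x,y,x)=x$.

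The main obstacle is strictness: $\M_n\not\implies_{\lex}\M_{n+1}$ for every $n\geqslant 3$. The base case $n=3$ is explicitly flagged by the authors: $\M_4$ coincides up to relabelling with the matrix in Remark~2.4 of~\cite{HJJW23}, which exhibits a finitely complete majority category failing $(M_4)$. For the inductive step, I would construct, for each $n$, a finitely complete category satisfying $(M_n)$ but not $(M_{n+1})$. A natural candidate is the variety of algebras generated by a single $\binom{n}{2}$-ary operation subject only to the row identities of~$\M_n$, together with a verification (for instance on a suitable free algebra or a carefully chosen small finite model) that no $\binom{n+1}{2}$-ary term derivable in this signature can satisfy the row identities of~$\M_{n+1}$. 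Alternatively, one could apply the syntactic implication algorithm of~\cite{HJJ22} and confirm a negative result. The combinatorial core of the first approach, namely ruling out all terms of the required arity in a rapidly growing clone, is where I would expect the main technical difficulty.
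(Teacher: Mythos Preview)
Your argument for $\M_{n+1}\implies_{\lex}\M_n$ via coordinate duplication is correct and is simply the semantic counterpart of the paper's syntactic verification through the algorithm of~\cite{HJJ22}. For non-degeneracy, your anti-triviality argument is fine, but the non-triviality argument is circular as written: you invoke the fact that arithmetical varieties satisfy every \emph{non-trivial} matrix property in order to place Boolean rings in $\mclex\{\M_n\}$, which presupposes that $\M_n$ is non-trivial. (The fix is easy: argue directly, e.g.\ via Baker--Pixley, that a variety with a majority term satisfies $(M_n)$ for all~$n$, or simply invoke the triviality criterion of~\cite{HJJ22} as the paper does. Note also that within the paper, the result you cite is Theorem~\ref{theorem: Ari implies every non trivial matrix}, whose proof uses the present proposition.)

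The genuine gap is strictness. You correctly identify this as the hard part, but you do not prove it: you cite the case $n=3$ and then only sketch two possible strategies for general~$n$ (building an explicit model, or running the algorithm of~\cite{HJJ22}) without carrying either out. Note too that there is no ``inductive step'' here: the failures $\M_n\not\implies_{\lex}\M_{n+1}$ are logically independent, so the $n=3$ case does not bootstrap anything. The paper avoids model construction entirely and gives a short uniform counting argument via the algorithm. One must show that for no $(n+1)$-row matrix $M'$ built from rows of~$\M_n$ is there a row-wise interpretation $M''$ whose left columns lie among those of~$\M_{n+1}$ while its right column does not. The key observation is that every $x_1$ in the left part of $M'$ must be interpreted as $x_1$ in~$M''$ (if some row's $x_1$'s were sent to $x_l$ with $l>1$, the uniqueness of $x_l$ in each row of $\M_{n+1}$ forces all those left columns of $M''$ to coincide, and hence the right column of $M''$ to equal one of them). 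But then the left part of $M''$ contains at least $(n+1)(n-1)$ occurrences of~$x_1$, whereas each left column of $\M_{n+1}$ contains exactly two, giving only $2\binom{n}{2}=n(n-1)$ in total---a contradiction. This is the missing idea in your proposal.
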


\begin{proof}
According to Theorem~2.5 in~\cite{HJJ22}, a matrix is anti-trivial if and only if its right column appears among its left columns. Clearly, this is not the case for these matrices~$\M_n$. Moreover, we can immediately deduce from Theorem~2.3 in~\cite{HJJ22} that these matrices are not trivial neither and so not degenerate.

Let us now prove that for $n\geqslant 3$, we have $\M_{n+1} \implies_{\lex} \M_n$. To fix notation, let $m=\binom{n}{2}$, $k=\binom{n-1}{2}+1$, $m'=\binom{n+1}{2}$ and $k'=\binom{n}{2}+1$ so that $\M_n\in\Mclex[n,m,k]$ and $\M_{n+1}\in\Mclex[{n+1},m',k']$. According to the algorithm from~\cite{HJJ22}, to prove $\M_{n+1} \implies_{\lex} \M_n$ it is enough to prove that the matrix $M'$ formed by the first $n$ rows of $\M_{n+1}$ admits a row-wise interpretation of type $(\{x_1,\dots,x_k\},\dots,\{x_1,\dots,x_k\})$ whose left columns can be found among the left columns of $\M_n$ and whose right column is the right column of~$\M_n$. This can be easily seen by interpreting in $M'$ each $x_1$ by $x_1$ and each other variable (which appears only once in each row) by the only variable such that the left column $c'_{i,j}$ of $M'$ for $1\leqslant i<j\leqslant n$ is interpreted as the left column $c_{i,j}$ of~$\M_n$, and the left column $c'_{i,n+1}$ of $M'$ for $1\leqslant i\leqslant n$ is interpreted as the left column $c_{i,n}$ if $i<n$ or $c_{1,n}$ if $i=n$ of~$\M_n$.

It remains to prove that we do not have $\M_n \implies_{\lex} \M_{n+1}$. Again according to the algorithm from~\cite{HJJ22}, it is enough to prove that for any matrix $M'$ with $n+1$ rows and whose each row is a row of~$\M_n$, and for any row-wise interpretation $M''$ of $M'$ of type $(\{x_1,\dots,x_{k'}\},\dots,\{x_1,\dots,x_{k'}\})$, if the left columns of $M''$ can be found among the left columns of~$\M_{n+1}$, then so can its right column. By contradiction, suppose $M'$ and $M''$ are such matrices such that the left columns of~$M''$, but not its right column, are among the left columns of~$\M_{n+1}$. In each row of~$\M_n$ (and so of~$M'$), there are exactly $n-1$ many $x_1$'s in its left part (i.e., not counting its rightmost entry). If, for some $j\in\{1,\dots,n+1\}$, these $x_1$'s in the $j^{\textrm{th}}$ row of $M'$ are interpreted in $M''$ as $x_l$ with $1<l\leqslant k'$, then the left columns of $M'$ containing $x_1$ in the $j^{\textrm{th}}$ row are all interpreted the same in $M''$ since $x_l$ appears exactly once in the $j^{\textrm{th}}$ row of $\M_{n+1}$. In that case, since for each $j'\in\{1,\dots,n+1\}$, there is a left column of $M'$ with $x_1$ as $j^{\textrm{th}}$ and $j'^{\textrm{ th}}$ entries, the right column of~$M'$ (constituted only of $x_1$'s) is interpreted in $M''$ in the same way as its left columns containing $x_1$ in the $j^{\textrm{th}}$ row. This would imply that the right column of $M''$ must be found among the left columns of $\M_{n+1}$, which is a contradiction. Therefore, each $x_1$ in the left part of $M'$ is interpreted as $x_1$ in~$M''$. Thus, there are at least $(n+1)(n-1)$ many $x_1$'s in the left part of~$M''$. However, each left column of $M''$ being a left column of $\M_{n+1}$, they contain exactly two $x_1$'s each. There are thus exactly $2m=2\binom{n}{2}=n(n-1)$ many $x_1$'s in the left part of~$M''$, which is a contradiction.
\end{proof}

Let us recall now that from Corollary~2.4 in~\cite{HJJ22} and Corollary~2.4 in~\cite{HJ23} we have the following two propositions.

\begin{proposition}\cite{HJJ22}\label{proposition: non-trivial two row matrix is Mal'tsev}
If $\M\in\matr(2,m,k)$ is a two-row matrix (for integers $m\geqslant 0$ and $k>0$), then $\mclex\{\M\}$ is trivial, anti-trivial or the matrix class of Mal'tsev categories. 
\end{proposition}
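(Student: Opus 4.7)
The plan is to combine the algorithm of~\cite{HJJ22} for deciding matrix implications with a normalisation of the given two-row matrix.

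First, I would reduce $\M \in \matr(2,m,k)$ to a canonical form $\M'$ with $\mclex\{\M'\}=\mclex\{\M\}$. Duplicating or deleting identical left columns, permuting the order of left columns, and bijectively relabelling the variables all preserve the matrix class. Using these moves, one arranges for the left columns of $\M'$ to be pairwise distinct and for the variables actually occurring to be precisely $\{x_1,\dots,x_{k'}\}$ for some $k'\leqslant k$, so that $\M'$ is determined by a subset $L\subseteq\{x_1,\dots,x_{k'}\}^2$ of left-column pairs together with a right column $(y_1,y_2)\in\{x_1,\dots,x_{k'}\}^2$.

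Next, I would dispose of the degenerate cases using the classifications of trivial and anti-trivial matrices from~\cite{HJJ22}: anti-triviality corresponds exactly to $(y_1,y_2)\in L$, and triviality to the associated criterion on ``unused'' variables. In either case there is nothing left to prove. In the remaining non-degenerate case, relabelling brings the right column into either $(x_1,x_1)$ or $(x_1,x_2)$. My goal is then to establish both $\Mal\implies_\lex \M'$ and $\M'\implies_\lex\Mal$, which via the algorithm of~\cite{HJJ22} amounts to exhibiting two explicit row-wise interpretations: for $\M'\implies_\lex\Mal$, an interpretation of $\Mal$ whose three left columns lie in $L$ and whose right column matches that of $\M'$; and for the converse direction, for each row of $\M'$ a row of $\Mal$ together with a variable substitution such that the resulting left columns remain among the left columns of $\Mal$.

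The main obstacle is this non-degenerate case: one must verify that non-triviality together with non-anti-triviality of a two-row matrix are always strong enough to witness both implications. The insight I would try to exploit is that, for only two rows, the Mal'tsev pattern $\{(x_1,x_2),(x_2,x_2),(x_2,x_1)\}$ with right column $(x_1,x_1)$ is the minimal non-trivial closure skeleton up to variable identification: every richer non-degenerate~$L$ still contains, after relabelling, such a Mal'tsev sub-pattern, while every such $L$ is itself obtainable from the Mal'tsev columns by merging variables. Establishing this equivalence cleanly, without descending into a long case split on the structure of $L$, is where the real work lies.
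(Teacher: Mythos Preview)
The paper does not prove this proposition at all: it is stated as a recalled result, attributed to Corollary~2.4 of~\cite{HJJ22}, with no argument given. So there is no in-paper proof to compare your plan against; any genuine proof would have to be reconstructed from the cited reference.

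That said, your outline has the right overall shape --- normalise, dispose of the degenerate cases via the trivial/anti-trivial criteria, then in the remaining case establish $\lex$-equivalence with $\Mal$ --- but two points deserve attention. First, your description of which interpretation witnesses which implication appears to be reversed: in the algorithm of~\cite{HJJ22}, to establish $\N\implies_\lex\M$ one selects rows of~$\N$ and exhibits a row-wise interpretation whose left columns lie among those of~$\M$ and whose right column equals that of~$\M$; your paragraph assigns the roles of $\M'$ and $\Mal$ the other way around in both directions. Second, the heuristic that a non-degenerate two-row $L$ ``contains a Mal'tsev sub-pattern after relabelling'' is not by itself what the algorithm checks, and the converse direction $\Mal\implies_\lex\M'$ genuinely may require the iterated closure step of the algorithm rather than a single interpretation. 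You correctly flag this non-degenerate case as the real work; the clean route taken in~\cite{HJJ22} goes through their structural characterisation of trivial matrices rather than a direct case analysis on~$L$, and that is what you would want to invoke to finish.
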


\begin{proposition}\cite{HJ23}\label{proposition: M implies Mal}
For any matrix $\M\in\matr$, the implication $\M \implies_{\lex} \Mal$ does \emph{not} hold if and only if every selection of two rows from $\M$ forms an anti-trivial matrix.
\end{proposition}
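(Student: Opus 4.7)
The plan is to link $\M$ to the 2-row trichotomy of Proposition~\ref{proposition: non-trivial two row matrix is Mal'tsev} through a row-deletion lemma: for any $\M\in\matr$ and any choice of two row indices $i_1,i_2$, the 2-row matrix $\N$ obtained from rows $i_1,i_2$ of $\M$ satisfies $\M\implies_{\lex}\N$. This is proved by a ``pad with terminals'' construction: given an internal binary relation $r\colon S\rightarrowtail A\times B$ in a finitely complete category $\C$ with property~$\M$, set $X_{i_1}=A$, $X_{i_2}=B$, and $X_\ell=1$ for the remaining components, and view $S$ as a subobject $R\rightarrowtail X_1\times\cdots\times X_n$ via the canonical isomorphism $X_1\times\cdots\times X_n\cong A\times B$. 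Any row-wise interpretation of $\N$ of type $(\hom(T,A),\hom(T,B))$ lifts to a row-wise interpretation of $\M$ by sending the unconstrained components to the unique map into~$1$, so $\M$-closedness of $R$ transfers to $\N$-closedness of $S$.

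For the forward direction, assume $\M\not\implies_{\lex}\Mal$ and fix any 2-row selection $\N$. The row-deletion lemma gives $\M\implies_{\lex}\N$, so $\N\not\implies_{\lex}\Mal$ by transitivity. Proposition~\ref{proposition: non-trivial two row matrix is Mal'tsev} then forces $\mclex\{\N\}$ to be trivial, anti-trivial, or the Mal'tsev class. The trivial class sits at the bottom of $\Mclex$, hence is contained in $\mclex\{\Mal\}$, so the trivial case gives $\N\implies_{\lex}\Mal$, a contradiction; the Mal'tsev case gives $\N\implies_{\lex}\Mal$ directly, equally contradictory. Therefore $\N$ must be anti-trivial.

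For the backward direction, assume every 2-row selection from $\M$ is anti-trivial. By the algorithm of~\cite{HJJ22} recalled in the proof of Proposition~\ref{proposition: strict implications lex context}, to establish $\M\not\implies_{\lex}\Mal$ it is enough to produce one 2-row selection $M'$ from $\M$ admitting no row-wise interpretation of type $(\{x_1,x_2\},\{x_1,x_2\})$ whose left columns lie among $\{(x_1,x_2)^\top,(x_2,x_2)^\top,(x_2,x_1)^\top\}$ and whose right column is $(x_1,x_1)^\top$. Any 2-row selection $\N$ from rows $i_1,i_2$ of $\M$ suffices: its right column $(y_{i_1},y_{i_2})^\top$ equals some left column $j_0$ by anti-triviality, so any interpretation $(f_1,f_2)$ mapping the right column to $(x_1,x_1)^\top$ must satisfy $f_1(y_{i_1})=f_2(y_{i_2})=x_1$, forcing column $j_0$ to be interpreted as $(x_1,x_1)^\top$ as well---which is not a left column of $\Mal$.

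The main obstacle is the backward direction: isolating precisely why anti-triviality of a 2-row sub-matrix blocks the algorithmic reduction to $\Mal$. The argument hinges on two facts: the three left columns of $\Mal$ are precisely the columns over $\{x_1,x_2\}$ of length two \emph{different from} $(x_1,x_1)^\top$; and any interpretation of a column that equals the right column is itself the interpretation of the right column. The forward direction, by contrast, is a clean bookkeeping exercise on top of the terminal-object padding.
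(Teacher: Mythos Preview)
The paper does not give its own proof of this proposition; it is quoted from~\cite{HJ23} without argument, so there is nothing in the paper to compare your attempt against directly.

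Your argument is correct in substance, but there is a phrasing error in the backward direction. You write that ``it is enough to produce one 2-row selection $M'$ from $\M$ admitting no row-wise interpretation\ldots''. This is the wrong quantifier: the algorithm of~\cite{HJJ22}, as actually invoked in the proof of Proposition~\ref{proposition: strict implications lex context} for the negative direction, requires showing that for \emph{every} selection $M'$ of two rows of $\M$ and \emph{every} row-wise interpretation $M''$ of $M'$ of type $(\{x_1,x_2\},\{x_1,x_2\})$ whose left columns lie among the left columns of $\Mal$, the right column of $M''$ also lies among the left columns of $\Mal$ (i.e., the set $\{(x_1,x_2)^\top,(x_2,x_2)^\top,(x_2,x_1)^\top\}$ is closed under $\M$-interpretations). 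Fortunately your actual argument does prove this stronger statement: you take an arbitrary selection $\N$, use anti-triviality to identify its right column with some left column $j_0$, and observe that any interpretation therefore sends the right column wherever it sends column $j_0$. Since the interpretation takes values in $\{x_1,x_2\}^2$ and the left columns of $\Mal$ are exactly $\{x_1,x_2\}^2\setminus\{(x_1,x_1)^\top\}$, the conclusion ``right column $\neq (x_1,x_1)^\top$'' is equivalent to ``right column among the left columns of $\Mal$'', which is precisely the closure condition. So the mathematics is right; only the sentence stating what must be shown needs correcting.

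The forward direction via the terminal-object padding (your row-deletion lemma) together with the trichotomy of Proposition~\ref{proposition: non-trivial two row matrix is Mal'tsev} is clean and correct.
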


We are now able to prove the following.

\begin{proposition} \label{proposition: mal-maj lex division}
Given integers $n\geqslant 3$, $m\geqslant 0$ and $k>0$ and a matrix $\M\in\matr(n,m,k)$, we have that either $\M \implies_{\lex} \Mal$ or $\M_n \implies_{\lex} \M$, and these two implications cannot occur simultaneously.
\end{proposition}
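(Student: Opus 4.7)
The plan is to handle the two parts separately, using Proposition~\ref{proposition: M implies Mal} to relate ``failing to imply $\Mal$'' with the anti-triviality of two-row sub-matrices, and the implication algorithm of~\cite{HJJ22} (as recalled in the proof of Proposition~\ref{proposition: strict implications lex context}) to produce the positive implication $\M_n \implies_{\lex} \M$. I begin with the non-simultaneity: if both $\M \implies_{\lex} \Mal$ and $\M_n \implies_{\lex} \M$ held, then $\M_n \implies_{\lex} \Mal$ by transitivity, so it suffices to show $\M_n \not\implies_{\lex} \Mal$. By Proposition~\ref{proposition: M implies Mal}, this amounts to checking that every selection of two rows of $\M_n$ forms an anti-trivial matrix, i.e., by Theorem~2.5 of~\cite{HJJ22}, that its right column appears among its left columns. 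But for rows $i<j$ of $\M_n$, the restricted right column $(x_1,x_1)^{\top}$ is by construction exactly the restriction of the left column $c_{i,j}$.

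For the remaining direction, I assume $\M \not\implies_{\lex} \Mal$, so by Proposition~\ref{proposition: M implies Mal} every pair of rows of $\M$ already forms an anti-trivial matrix: for each pair $(i,j)$ with $1\leqslant i<j\leqslant n$, there is a left column index $J(i,j)$ of $\M$ whose entries at rows $i$ and $j$ match the right-column entries of $\M$ at those rows. According to the algorithm of~\cite{HJJ22}, to establish $\M_n \implies_{\lex} \M$ it suffices to find a row-wise interpretation of $\M_n$ in the alphabet of~$\M$ whose left columns are among the left columns of $\M$ and whose right column is the right column of $\M$.

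Such an interpretation $(f_1,\dots,f_n)$ can be defined as follows. For each row $i$, set $f_i(x_1)$ to be the entry of the right column of $\M$ at row $i$. In row $i$ of $\M_n$, the variables $x_2,\dots,x_k$ appear each exactly once, distributed across the $\binom{n-1}{2}$ left columns $c_{i',j'}$ with $i\notin\{i',j'\}$; so for the unique such $c_{i',j'}$ containing $x_l$ in row $i$, define $f_i(x_l)$ to be the entry of $\M$ at row $i$ and column $J(i',j')$. A direct check then shows that the left column $c_{i',j'}$ of $\M_n$ is interpreted as the $J(i',j')$-th column of $\M$: the entries at rows $i'$ and $j'$ are correct by the defining property of $J(i',j')$, and at any other row by the very definition of $f_i$ on the corresponding $x_l$; likewise, the right column of $\M_n$ is interpreted as the right column of $\M$ by definition of $f_i(x_1)$. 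The main obstacle, though mild, is the combinatorial bookkeeping: one must rely on the per-row bijection between the variables $x_2,\dots,x_k$ and the $\binom{n-1}{2}$ left columns of $\M_n$ avoiding that row's $x_1$'s in order to see that each $f_i$ is well-defined.
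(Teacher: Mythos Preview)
Your proof is correct, but the positive direction $\M_n \implies_{\lex} \M$ is handled differently from the paper. You work syntactically with the algorithm of~\cite{HJJ22}, explicitly constructing a row-wise interpretation of $\M_n$ in the alphabet of~$\M$: the key combinatorial point is the per-row bijection between the variables $x_2,\dots,x_k$ and the columns $c_{i',j'}$ avoiding row~$i$, which lets you send $c_{i',j'}$ to the chosen left column $J(i',j')$ of~$\M$. The paper instead argues semantically via Proposition~\ref{proposition: Mn}: taking a category $\C\in\mclex\{\M_n\}$, a relation $r$ and an interpretation of $\M$ with left columns $c'_l\c r$, it observes that anti-triviality of each two-row sub-matrix of~$\M$ gives $\pi_{i,j}y=\pi_{i,j}c'_l$ for some~$l$, whence $\pi_{i,j}y\c\pi_{i,j}r$, and then concludes $y\c r$ by~$(M_n)$. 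Your route is purely combinatorial and self-contained at the matrix level, while the paper's route makes direct use of the categorical reformulation~$(M_n)$ just established in Proposition~\ref{proposition: Mn}; both are short and the non-simultaneity argument is identical.
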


\begin{proof}
By construction, every selection of two rows from $\M_n$ forms an anti-trivial matrix and so $\M_n\implies_{\lex}\Mal$ does not hold by Proposition~\ref{proposition: M implies Mal}. This already proves that the two implications of the statement cannot occur simultaneously. Suppose now that $\M \implies_{\lex} \Mal$ does not hold. Then every selection of two rows of $\M$ forms an anti-trivial matrix by Proposition~\ref{proposition: M implies Mal}. Let $\C$ be any finitely complete category in $\mclex\{\M_n\}$ and let us prove it is in $\mclex\{\M\}$. Let $r\colon R \rightarrowtail X_1 \times \cdots \times X_n$ be any relation in $\C$ and let
$$M' = \left[ \begin{array}{ccc|c}
c'_1 & \cdots & c'_m & y
\end{array} \right]$$
be any row-wise interpretation of $\M$ of type $(\hom(S,X_1), \dots, \hom(S,X_n))$ where the left columns $c'_1,\dots,c'_m$ of $M'$ (viewed as morphisms $S \rightarrow X_1 \times \cdots \times X_n$) satisfy $c'_l \c r$ for each $l\in\{1,\dots,m\}$. We must show that the right column $y$ of~$M'$ (also viewed as a morphism $S \rightarrow X_1 \times \cdots \times X_n$) satisfy $y \c r$. Since, by Proposition~\ref{proposition: Mn}, $\C$ satisfies the property~$(M_n)$, we only have to show that $\pi_{i,j} y \c \pi_{i,j} r$ for any $i,j\in\{1,\dots,n\}$. For such $i$ and~$j$, since the matrix obtained by selecting the $i^{\textrm{th}}$ and the $j^{\textrm{th}}$ row of $\M$ is anti-trivial, we know that $\pi_{i,j} y = \pi_{i,j} c'_l$ for some $l\in\{1,\dots,m\}$. Therefore, $\pi_{i,j} y=\pi_{i,j} c'_l \c \pi_{i,j} r$ as desired.
\end{proof}

\section{The regular context}

Recall that a category is \emph{regular}~\cite{BGO71} if it is finitely complete, has coequalisers of kernel pairs and regular epimorphisms are stable under pullbacks. In that case, each morphism factorises as a regular epimorphism followed by a monomorphism. Although, in the finitely complete context, the implications of Proposition~\ref{proposition: strict implications lex context} are strict, this is not the case any more in the regular context as attested by the following theorem.

\begin{theorem}\label{theorem: maj implies Mn}
We have a sequence of equivalences
$$\cdots \equivalent_{\reg}\M_{n+1} \equivalent_{\reg} \M_n \equivalent_{\reg} \cdots \equivalent_{\reg} \M_5 \equivalent_{\reg} \M_4 \equivalent_{\reg} \M_3$$
i.e., for any integer $n\geqslant 3$, in the regular context, the property $(M_n)$ is equivalent to the majority property~$(M_3)$.
\end{theorem}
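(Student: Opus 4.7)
By Proposition~\ref{proposition: strict implications lex context}, each implication $\M_{n+1} \implies_{\lex} \M_n$ holds, and these pass to the regular context, yielding the downward implications $\M_{n+1} \implies_{\reg} \M_n$. It therefore suffices to prove $\M_3 \implies_{\reg} \M_n$ for every $n \geqslant 3$ or, via Proposition~\ref{proposition: Mn}, that any regular category satisfying the majority property $(M_3)$ also satisfies $(M_n)$.

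Fix such a regular category $\C$, a monomorphism $r\colon R \rightarrowtail X_1 \times \cdots \times X_n$ and a morphism $y\colon S \to X_1 \times \cdots \times X_n$ with $\pi_{i,j}y \c \pi_{i,j}r$ for all $i,j$. The plan is to prove, by induction on $k \in \{2,3,\dots,n\}$, the following auxiliary statement: for every $k$-element subset $I = \{i_1 < \cdots < i_k\} \subseteq \{1,\dots,n\}$, there exist a regular epimorphism $q\colon T \twoheadrightarrow S$ and a morphism $h\colon T \to R$ with $\pi_I r\,h = \pi_I y\,q$. The case $k=2$ is immediate from the hypothesis (take $q = \mathrm{id}_S$). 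At $k = n$, since $r$ is a monomorphism and regular epimorphisms are orthogonal to monomorphisms in $\C$, one descends the factorisation to obtain $h'\colon S \to R$ with $rh' = y$, that is, $y \c r$.

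For the inductive step $k \to k+1$, fix a $(k+1)$-subset $I = \{i_1 < \cdots < i_{k+1}\}$ and view $X_{i_1} \times \cdots \times X_{i_{k+1}}$ as a ternary product $X_{i_1} \times X_{i_2} \times (X_{i_3} \times \cdots \times X_{i_{k+1}})$. Let $\bar r_I \colon \bar R_I \rightarrowtail X_{i_1} \times \cdots \times X_{i_{k+1}}$ be the regular image of $\pi_I r$; being a monomorphism, it is a legitimate target for $(M_3)$. Pulling back the covers furnished by the inductive hypothesis on the $k$-subsets $I \setminus \{i_1\}$ and $I \setminus \{i_2\}$ to a common cover $q\colon T \twoheadrightarrow S$, the three pair-projections of $\pi_I y\,q$ in the chosen ternary decomposition factor through the corresponding pair-projections of $\bar r_I$: the $\{i_1,i_2\}$-pair is obtained from the original hypothesis (composed with $q$), while the other two come from the inductive hypothesis. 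Applying $(M_3)$ yields a morphism $\alpha\colon T \to \bar R_I$ with $\bar r_I\,\alpha = \pi_I y\,q$, and a final pullback of $\alpha$ along the regular epimorphism $R \twoheadrightarrow \bar R_I$ lifts this to the required factorisation of $\pi_I y$ through $\pi_I r$ over a finer cover $T' \twoheadrightarrow S$.

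The main delicate point is the careful bookkeeping of covers: each inductive invocation may enlarge $T$, so the argument relies essentially on the stability of regular epimorphisms under pullback and their closure under composition in $\C$. The assumption that $r$ is a monomorphism is used only at the very last step, where it enables the final descent from $T$ back to $S$; this last feature is precisely what is unavailable in the purely lex context and is responsible for the strictness of the implications of Proposition~\ref{proposition: strict implications lex context}.
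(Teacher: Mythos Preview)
Your argument is correct and self-contained, but it follows a genuinely different route from the paper. The paper does not argue by induction at all: after the same reduction via Proposition~\ref{proposition: strict implications lex context}, it takes the (regular epimorphism, monomorphism)-factorisations $R \twoheadrightarrow R_{i,j} \rightarrowtail X_i \times X_j$ of all the $\pi_{i,j}r$ and then invokes Theorem~5.1 and Proposition~4.1 of~\cite{Hoefnagel20} to conclude that the square
\[
\xymatrix@R=2.5pc@C=6pc{R \ar[r] \ar@{ >->}[d]_-{r} & \prod_{i,j} R_{i,j} \ar@{ >->}[d] \\ \prod_l X_l \ar[r] & \prod_{i,j} X_i \times X_j}
\]
is a pullback. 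The hypotheses on $y$ then yield a map $S \to \prod_{i,j} R_{i,j}$ directly, and the pullback property gives $y \c r$ with no covers and no induction.

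The trade-off is clear. The paper's proof is shorter and more conceptual, but it outsources the real work to the cited characterisation of regular majority categories (that $R$ is recovered as the limit of its binary images). Your proof is longer but entirely elementary: it uses nothing beyond $(M_3)$ itself, the (regular epi, mono) orthogonality, and stability and composition of regular epimorphisms. In particular, your final paragraph is exactly right about where regularity enters, and your argument makes this visible in a way the paper's citation-based proof does not. Both are valid; yours would be preferable in a context where~\cite{Hoefnagel20} is not assumed.
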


\begin{proof}
In view of Proposition~\ref{proposition: strict implications lex context} and since $\M_3=\Maj$, it is enough to prove that any regular majority category $\C$ satisfies $(M_n)$ for each $n\geqslant 3$. Given such $\C$ and~$n$, let $r\colon R\rightarrowtail X_1 \times \cdots \times X_n$ be any monomorphism and let $y\colon S\rightarrow X_1 \times \cdots \times X_n$ be any morphism in $\C$ such that, for any $i,j\in\{1,\dots,n\}$, $\pi_{i,j}y \c \pi_{i,j} r$. We must show that $y \c r$. For all $i,j\in\{1,\dots,n\}$, let
$$\xymatrix{R \ar@{->>}[r]^-{e_{i,j}} & R_{i,j} \ar@{ >->}[r]^-{m_{i,j}} & X_i \times X_j}$$ 
be the (regular epimorphism, monomorphism)-factorisation of the composite 
$$\xymatrix{R \ar@{ >->}[r]^-{r} & X_1 \times \cdots \times X_n \ar[r]^-{\pi_{i,j}} & X_i \times X_j.}$$
By the equivalence of (i) and (v) of Theorem~5.1 in~\cite{Hoefnagel20}, and Proposition~4.1 in~\cite{Hoefnagel20}, the diagram
$$\xymatrix@R=3pc@C=7pc{R \ar[r]^-{(e_{i,j})_{(i,j)\in\{1,\dots,n\}^2}} \ar@{ >->}[d]_-{r} & \prod\limits_{i,j = 1}^n R_{i,j} \ar@{ >->}[d]^{\prod\limits_{i,j = 1}^n m_{i,j}} \\
\prod\limits_{l = 1}^n X_l \ar[r]_-{(\pi_{i,j})_{(i,j)\in\{1,\dots,n\}^2}}  & \prod\limits_{i,j = 1}^n X_i \times X_j}$$
is a pullback. Since, for all $i,j\in\{1,\dots,n\}$, we have supposed $\pi_{i,j}y \c \pi_{i,j} r$, we know that $\pi_{i,j}y \c m_{i,j}$; producing a morphism $S\rightarrow \prod_{i,j = 1}^n R_{i,j}$ so that, by the universal property of the above pullback, we get $y \c r$.
\end{proof}

\begin{corollary}\label{corollary: implies Mal or is implied by Maj}
For any matrix $\M$ in $\matr$, we have that either $\M \implies_{\reg} \Mal$ or $\Maj \implies_{\reg} \M$, and these two implications cannot occur simultaneously.
\end{corollary}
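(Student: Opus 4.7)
The plan is to derive the corollary by combining Proposition~\ref{proposition: mal-maj lex division} with the collapse of the hierarchy $(\M_n)_{n\geqslant 3}$ in the regular context supplied by Theorem~\ref{theorem: maj implies Mn}, while handling the small-row cases separately. I would split the statement in two: first, that for any $\M\in\matr(n,m,k)$ at least one of $\M \implies_{\reg} \Mal$ or $\Maj \implies_{\reg} \M$ holds, and second, that both cannot hold simultaneously. The argument then proceeds by cases on the number of rows~$n$.

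For the main case $n\geqslant 3$, Proposition~\ref{proposition: mal-maj lex division} delivers either $\M \implies_{\lex} \Mal$ or $\M_n \implies_{\lex} \M$. Since $\mcreg\{\M'\}\subseteq\mclex\{\M'\}$ for every matrix $\M'$, any lex-implication automatically yields the corresponding regular implication, so the first alternative gives $\M \implies_{\reg} \Mal$. In the second alternative, $\mcreg\{\M_n\}\subseteq\mcreg\{\M\}$, and Theorem~\ref{theorem: maj implies Mn} identifies $\mcreg\{\M_n\}$ with $\mcreg\{\M_3\} = \mcreg\{\Maj\}$, yielding $\Maj \implies_{\reg} \M$.

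The remaining row counts are easy. If $n=2$, Proposition~\ref{proposition: non-trivial two row matrix is Mal'tsev} tells us $\mclex\{\M\}$ is trivial, anti-trivial, or equal to $\mclex\{\Mal\}$; the trivial and Mal'tsev subcases yield $\M \implies_{\lex} \Mal$ and hence $\M \implies_{\reg} \Mal$, while the anti-trivial subcase makes $\mcreg\{\M\}$ the class of all regular categories, so $\Maj \implies_{\reg} \M$ is automatic. If $n=1$, either the right-column variable appears among the left entries (then $\mclex\{\M\}$ is anti-trivial and again $\Maj \implies_{\reg} \M$), or it does not, in which case the property forces every internal monomorphism to be split epic and hence an isomorphism, so $\mcreg\{\M\}$ collapses to essentially the terminal category and is trivially contained in $\mcreg\{\Mal\}$. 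Finally, for non-simultaneity, if both implications held for some $\M$ we would obtain $\mcreg\{\Maj\}\subseteq\mcreg\{\M\}\subseteq\mcreg\{\Mal\}$, asserting that every regular majority category is Mal'tsev; this is refuted by $\Set$, which is a regular (in fact arithmetical) majority category in which the relation $\{(0,0),(0,1),(1,0)\}\subseteq\{0,1\}^2$ is not difunctional. I do not anticipate a real obstacle: the corollary is essentially a bookkeeping combination of Propositions~\ref{proposition: non-trivial two row matrix is Mal'tsev} and~\ref{proposition: mal-maj lex division} with Theorem~\ref{theorem: maj implies Mn}, the only mildly delicate point being the direct verification for $n=1$.
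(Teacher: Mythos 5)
Your overall decomposition---handling small row counts directly, treating $n\geqslant 3$ via Proposition~\ref{proposition: mal-maj lex division} combined with the collapse $\Maj \equivalent_{\reg} \M_n$ of Theorem~\ref{theorem: maj implies Mn}, and reducing non-simultaneity to the failure of $\Maj \implies_{\reg} \Mal$---is exactly the paper's strategy. However, your argument for non-simultaneity contains a genuine error: $\Set$ is \emph{not} a majority category (and a fortiori not arithmetical). Indeed, the ternary relation $R=\{(0,0,1),(0,1,0),(1,0,0)\}\subseteq\{0,1\}^3$ contains all three left columns of the row-wise interpretation of $\Maj$ sending $x_1\mapsto 0$ and $x_2\mapsto 1$ in every row, yet does not contain the right column $(0,0,0)$; so $\Set$ fails the matrix property $\Maj$. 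The non-difunctional relation you exhibit only shows $\Set\notin\mcreg\{\Mal\}$, which proves nothing without membership in $\mcreg\{\Maj\}$. To repair the step, either use a genuine regular majority category that is not Mal'tsev---the category of lattices works, as recalled in the remark following this corollary in the paper---or argue as the paper does: if both implications held then $\Maj\implies_{\reg}\Mal$, which by Theorem~2.3 of~\cite{HJ23} is equivalent to $\Maj\implies_{\lex}\Mal$, and the latter fails by Proposition~\ref{proposition: M implies Mal} since every selection of two rows from $\Maj$ is anti-trivial.

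A smaller inaccuracy occurs in your $n=1$ case: when the right-hand variable does not occur among the left entries and $m\geqslant 1$, the property does \emph{not} force every monomorphism to be split epic. For instance the two-element chain, which is a regular category, has $\M$-closed relations for the matrix $\left[ \begin{array}{c|c} x_1 & x_2 \end{array}\right]$ and yet has a non-split monomorphism. What one actually obtains is that any finitely complete category with $\M$-closed relations is a preorder, i.e., such a matrix is trivial; the desired conclusion $\M\implies_{\reg}\Mal$ then follows because finitely complete preorders are Mal'tsev. The paper avoids this bookkeeping by first disposing of all trivial and anti-trivial matrices at once and only then invoking Proposition~\ref{proposition: non-trivial two row matrix is Mal'tsev} for non-degenerate matrices with $n\leqslant 2$. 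Your conclusions survive in every case, but the non-simultaneity step as written does not stand.
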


\begin{proof}
If these two implications occur simultaneously, we would have $\Maj \implies_{\reg} \Mal$, which, by Theorem~2.3 in~\cite{HJ23}, is equivalent to $\Maj \implies_{\lex} \Mal$. But this last implication does not hold, for instance by Proposition~\ref{proposition: M implies Mal}.

Let us now prove that $\M \implies_{\reg} \Mal$ or $\Maj \implies_{\reg} \M$. If $\M$ is trivial, then $\M \implies_{\reg} \Mal$; and if $\M$ is anti-trivial, we have $\Maj \implies_{\reg} \M$. It remains to treat the case where $\M$ is non-degenerate. Suppose that $\M$ has $n$ rows. If $n\leqslant 2$, by Proposition~\ref{proposition: non-trivial two row matrix is Mal'tsev}, $\M\equivalent_{\lex}\Mal$ and so in particular $\M \implies_{\reg} \Mal$. Let us now suppose that $n\geqslant 3$. Then, we have that either $\M \implies_{\lex} \Mal$ or $\M_n \implies_{\lex} \M$ by  Proposition~\ref{proposition: mal-maj lex division}. The result then follows since $\Maj \equivalent_{\reg} \M_n$ for all $n\geqslant 3$ by Theorem~\ref{theorem: maj implies Mn}.
\end{proof}

\begin{remark}
Let us make clear here that, although Corollary~\ref{corollary: implies Mal or is implied by Maj} takes place in the regular context, it holds for the matrix properties in the sense of~\cite{Janelidze06} but \emph{not} for the matrix properties in the sense of~\cite{Janelidze08}. Indeed, an example of these latter matrix properties is the property of being a Goursat category~\cite{CKP93}. An example of a (regular) Goursat category which is not Mal'tsev is given by the category of implication algebras~\cite{Mitschke71} while an example of a majority category which is not Goursat is given by the category of lattices~\cite{CKP93,Hoefnagel19}.
\end{remark}

The proof of the theorem below makes use of the fact that $\Ari \implies_{\lex} \Mal$ and $\Ari \implies_{\lex} \Maj$. These implications already appear in Figure~\ref{figure 3,7,2}. For a proof of them, we refer the reader to Section~5 of~\cite{HJJ22}. There, it is actually shown that $\mclex\{\Ari\}=\mclex\{\Mal\}\cap\mclex\{\Maj\}$, i.e., a finitely complete category is arithmetical if and only if it is both Mal'tsev and majority.

\begin{theorem}\label{theorem: Ari implies every non trivial matrix}
For any non-trivial matrix $\M$ in $\matr$, we have $\Ari \implies_{\reg} \M$.
\end{theorem}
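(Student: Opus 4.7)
The plan is to show that any regular arithmetical category $\C$ satisfies the matrix property $\M$. We dispose of the easy cases first: if $\M$ is anti-trivial, every finitely complete category satisfies it; if $\M$ has $n \leq 2$ rows and is non-degenerate, then Proposition~\ref{proposition: non-trivial two row matrix is Mal'tsev} gives $\mclex\{\M\} = \mclex\{\Mal\}$, so the inclusion $\Ari \implies_{\lex} \Mal$ yields $\Ari \implies_{\lex} \M$ and hence $\Ari \implies_{\reg} \M$. The substantive case is thus $\M \in \matr(n,m,k)$ with $n \geq 3$ and $\M$ non-degenerate.

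For this case, $\C$, being a regular majority category, satisfies the property $(M_n)$ by Theorem~\ref{theorem: maj implies Mn}. Given a relation $r \colon R \rightarrowtail X_1 \times \cdots \times X_n$ in $\C$ and a row-wise interpretation of $\M$ with left columns $c_l \c r$ and right column $y$, we form, for each pair $i,j$, the (regular epimorphism, monomorphism)-factorisation of $\pi_{i,j} r$, whose monomorphism part we denote $m_{i,j} \colon R_{i,j} \rightarrowtail X_i \times X_j$. The pullback argument appearing in the proof of Theorem~\ref{theorem: maj implies Mn} reduces the conclusion $y \c r$ to verifying $\pi_{i,j} y \c m_{i,j}$ for every pair $i,j$.

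To check the latter for a fixed pair $i,j$, we analyse the 2-row submatrix $\M_{i,j}$ of $\M$ formed by rows $i$ and $j$, which by Proposition~\ref{proposition: non-trivial two row matrix is Mal'tsev} is trivial, anti-trivial, or Mal'tsev. The key auxiliary observation is that $\M_{i,j}$ cannot be trivial: this follows from the inclusion $\mclex\{\M\} \subseteq \mclex\{\M_{i,j}\}$, obtained by viewing an arbitrary binary relation $\rho \colon R' \rightarrowtail Y \times Y'$ in a category satisfying $\M$ as an $n$-ary relation via $X_i = Y$, $X_j = Y'$, and $X_k = 1$ for $k \notin \{i,j\}$, and invoking $\M$-closedness for the corresponding $n$-ary relation. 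With the trivial alternative ruled out, if $\M_{i,j}$ is anti-trivial then the right column of $\M_{i,j}$ coincides with some left column, so $\pi_{i,j} y = \pi_{i,j} c_l$ for some $l$, whence $\pi_{i,j} y \c m_{i,j}$; if $\M_{i,j}$ is Mal'tsev, then $\C$ (being Mal'tsev) satisfies the matrix property $\M_{i,j}$, and applying it to the relation $m_{i,j}$ (noting each $\pi_{i,j} c_l \c m_{i,j}$) yields $\pi_{i,j} y \c m_{i,j}$. The main technical hurdle is the auxiliary inclusion of matrix classes; once it is settled, the rest of the argument is a direct application of the tools already established in the paper.
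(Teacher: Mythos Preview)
Your proof is correct and follows essentially the same approach as the paper: dispose of $n\leqslant 2$ via Proposition~\ref{proposition: non-trivial two row matrix is Mal'tsev}, and for $n\geqslant 3$ use the non-triviality of each two-row submatrix together with the Mal'tsev property of $\C$ to obtain $\pi_{i,j}y \c m_{i,j}$, then invoke the majority structure to conclude $y \c r$. The only minor differences are that the paper cites Proposition~1.7 of~\cite{Janelidze06b} for the non-triviality of the two-row submatrices where you supply a self-contained argument via terminal padding, and that the paper finishes by constructing a regular-epimorphic cover $\alpha\colon Q\twoheadrightarrow S$ with $\pi_{i,j}y\alpha \c \pi_{i,j}r$ and then applying $(M_n)$, whereas you appeal directly to the pullback square from the proof of Theorem~\ref{theorem: maj implies Mn}; your route at this last step is in fact slightly more economical.
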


\begin{proof}
Suppose that $\M$ is any non-trivial matrix with $n>0$ rows. If $n\leqslant 2$, then $\Mal\implies_{\lex}\M$ by Proposition~\ref{proposition: non-trivial two row matrix is Mal'tsev} and therefore $\Ari\implies_{\lex} M$ since $\Ari\implies_{\lex}\Mal$. We can therefore suppose without loss of generality that $n\geqslant 3$. Let $\C$ be a regular arithmetical category, $r\colon R\rightarrowtail X_1 \times \cdots \times X_n$ be any monomorphism in $\C$ and the matrix
$$M'=\left[ \begin{array}{ccc|c}
x_{11} & \cdots & x_{1m} & y_{1} \\
\vdots &  		& \vdots & \vdots\\
x_{n1} & \cdots & x_{nm} & y_{n}
\end{array}\right]$$
be a row-wise interpretation of the matrix $\M$ of type $(\hom(S,X_1), \dots, \hom(S, X_n))$. Viewing each of the left columns $c_1,\dots,c_m$ of $M'$ and its right column $y$ as morphisms $S\rightarrow X_1 \times \cdots \times X_n$, we suppose that $c_l \c r$ for each $l\in\{1,\dots,m\}$ and we must show $y \c r$. For any $i,j\in\{1,\dots,n\}$, consider the (regular epimorphism, monomorphism)-factorisation
$$\xymatrix{R \ar@{->>}[r]^-{e_{i,j}} & R_{i,j} \ar@{ >->}[r]^-{m_{i,j}} & X_i \times X_j}$$
of the composite
$$\xymatrix{R \ar@{ >->}[r]^-{r} & X_1 \times \cdots \times X_n \ar[r]^-{\pi_{i,j}} & X_i \times X_j.}$$
For any such~$i,j$, by Proposition~1.7 in~\cite{Janelidze06b}, since $\M$ is non-trivial, the matrix $\M[i\colon\!\!,j\colon\!\!]$ formed from selecting the $i^{\textrm{th}}$ and the $j^{\textrm{th}}$ row of $\M$ is non-trivial. Hence, $\Mal\implies_{\lex}\M[i\colon\!\!,j\colon\!\!]$ by Proposition~\ref{proposition: non-trivial two row matrix is Mal'tsev} and so $\Ari\implies_{\lex}\M[i\colon\!\!,j\colon\!\!]$. Since $\pi_{i,j}c_l\c m_{i,j}$ for each $l\in\{1,\dots,m\}$, this implies that for any $i,j$ there is a morphism $f_{i,j}\colon S \rightarrow R_{i,j}$ such that $m_{i,j}f_{i,j} = \pi_{i,j} y$. Considering the pullback
$$\xymatrix{Q_{i,j} \ar[r]^-{\beta_{i,j}} \ar@{->>}[d]_-{\alpha_{i,j}} & R \ar@{->>}[d]^-{e_{i,j}} \\ S \ar[r]_-{f_{i,j}} & R_{i,j}}$$
we know that $\pi_{i,j}y\alpha_{i,j}\c \pi_{i,j}r$. Taking the limit of the diagram formed by the $\alpha_{i,j}$ produces a regular epimorphism $\alpha\colon Q \twoheadrightarrow S$ such that, for each $i,j\in\{1,\dots,n\}$, we have $\pi_{i,j}y \alpha \c \pi_{i,j} r$. By Theorem~\ref{theorem: maj implies Mn} we have $\Maj \equivalent_{\reg} \M_n$, so that $\Ari\implies_{\reg}\M_n$. It follows that $y \alpha \c r$, and since $\alpha$ is a regular epimorphism and $r$ a monomorphism, we have that $y \c r$.
\end{proof}

A regular Mal'tsev category $\C$ has been shown to have distributive lattices of equivalence relations if and only if it is a majority category~\cite{Hoefnagel20}. Various alternative characterisations of \emph{equivalence distributive} regular Mal'tsev categories have been given in~\cite{GRT20}, as well as equivalence distributive Goursat categories. By the theorem above, every regular equivalence distributive Mal'tsev category satisfies each non-trivial matrix property from~$\Mclex$. 

By definition, a matrix $\M\in\matr$ is trivial if any finitely complete category with $\M$-closed relations is a preorder. In~\cite{HJ22}, it is proved that we can equivalently consider only finitely complete \emph{pointed} categories to decide whether such a matrix is trivial, i.e., a matrix $\M\in\matr$ is trivial if and only if any finitely complete pointed category with $\M$-closed relations is a preorder. As a corollary of Theorem~\ref{theorem: Ari implies every non trivial matrix}, we prove that we can equivalently only consider varieties of universal algebras.

\begin{corollary}
For a matrix $\M\in\matr$, the following statements are equivalent.
\begin{enumerate}[label=(\roman*)]
\item\label{corollary trivial lex} $\M$ is trivial, i.e., any finitely complete category with $\M$-closed relations is a preorder.
\item\label{corollary trivial pointed lex} Any pointed finitely complete category with $\M$-closed relations is a preorder.
\item\label{corollary trivial reg} Any regular category with $\M$-closed relations is a preorder.
\item\label{corollary trivial varieties} Any variety with $\M$-closed relations is a preorder, i.e., the equation $x=y$ holds in its theory.
\item\label{corollary trivial Set op} The dual of the category of sets $\Set^{\op}$ does not have $\M$-closed relations.
\item\label{corollary trivial pointed Set op} The dual of the category of pointed sets $\Set_{\ast}^{\op}$ does not have $\M$-closed relations.
\item\label{corollary trivial Bool} The category of Boolean algebras $\Bool$ does not have $\M$-closed relations.
\end{enumerate}
\end{corollary}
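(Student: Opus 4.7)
The plan is to handle the seven equivalences as follows. The equivalence (i)$\Leftrightarrow$(ii) is already recalled from~\cite{HJ22} in the paragraph immediately preceding the corollary, so it needs no new argument. The implications (i)$\Rightarrow$(iii) and (i)$\Rightarrow$(iv) are immediate since every regular category is finitely complete and every variety of universal algebras is regular. For (i)$\Rightarrow$(v), (i)$\Rightarrow$(vi) and (i)$\Rightarrow$(vii), I would simply observe that each of $\Set^{\op}$, $\Set_{\ast}^{\op}$ and $\Bool$ is finitely complete and visibly not a preorder (each admits two non-identified parallel morphisms), so under the hypothesis of (i) none of them can have $\M$-closed relations.

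For the converses I would argue the contrapositive uniformly, using Theorem~\ref{theorem: Ari implies every non trivial matrix}. Assume $\M$ is non-trivial; then $\Ari \implies_{\reg} \M$, so every regular arithmetical category has $\M$-closed relations. To refute each of (iii)--(vii) it therefore suffices to exhibit a regular arithmetical member of the corresponding class which is not a preorder. The variety $\Bool$ of Boolean algebras is a paradigmatic arithmetical variety (witnessed for instance by the Pixley term $(x\wedge z)\vee(x\wedge\neg y)\vee(\neg y\wedge z)$), so it simultaneously refutes (iii), (iv) and (vii). For (v) and (vi) I would invoke $\Set^{\op}$ and $\Set_{\ast}^{\op}$ respectively, which are classical examples of regular arithmetical categories; for $\Set^{\op}$ this is most transparent through the equivalence with the category $\mathsf{CABA}$ of complete atomic Boolean algebras, and $\Set_{\ast}^{\op}$ is then handled analogously, or by a direct check of the Mal'tsev and majority properties dually in $\Set_{\ast}$.

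The hard part, I expect, will be justifying that $\Set^{\op}$ and $\Set_{\ast}^{\op}$ really are regular arithmetical, since this is not visible from their definitions and requires either invoking Stone-type equivalences or unwinding the dual notions of reflexive relation and of the majority property in $\Set$ and $\Set_{\ast}$. Once this point is secured, Theorem~\ref{theorem: Ari implies every non trivial matrix} bridges every remaining implication in a single step, and the chain of equivalences closes without any further routine calculation.
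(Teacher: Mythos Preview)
Your proposal is correct, but the paper's proof takes a shorter route for (v) and (vi). Rather than deducing (v)$\Rightarrow$(i) and (vi)$\Rightarrow$(i) from Theorem~\ref{theorem: Ari implies every non trivial matrix} by exhibiting $\Set^{\op}$ and $\Set_{\ast}^{\op}$ as regular arithmetical categories, the paper simply cites the equivalences (i)$\Leftrightarrow$(v) from~\cite{HJJ22} and (i)$\Leftrightarrow$(ii)$\Leftrightarrow$(vi) from~\cite{HJ22} as already established. Theorem~\ref{theorem: Ari implies every non trivial matrix} is then invoked only once, for (vii)$\Rightarrow$(i), with $\Bool$ as the witness; the chain (i)$\Rightarrow$(iii)$\Rightarrow$(iv)$\Rightarrow$(vii) closes the cycle.

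Your approach has the virtue of uniformity---a single contrapositive argument covers all the converses---but the price is exactly the ``hard part'' you identify: one must check that $\Set^{\op}$ and $\Set_{\ast}^{\op}$ are regular \emph{and} arithmetical. For $\Set^{\op}$ this is reasonable via the equivalence with complete atomic Boolean algebras, but for $\Set_{\ast}^{\op}$ it is not a fact one finds readily stated, and verifying both regularity and the arithmetical property dually in $\Set_{\ast}$ is real work. The paper's route avoids this entirely: since (i)$\Leftrightarrow$(v) and (i)$\Leftrightarrow$(vi) are already in the literature, the only arithmeticity needed is that of $\Bool$, which is genuinely classical.
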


\begin{proof}
The equivalence \ref{corollary trivial lex}$\Leftrightarrow$\ref{corollary trivial Set op} appears in~\cite{HJJ22} while the equivalences \ref{corollary trivial lex}$\Leftrightarrow$\ref{corollary trivial pointed lex}$\Leftrightarrow$\ref{corollary trivial pointed Set op} appear in~\cite{HJ22}. The implications \ref{corollary trivial lex}$\Rightarrow$\ref{corollary trivial reg}$\Rightarrow$\ref{corollary trivial varieties} are obvious and the implication \ref{corollary trivial varieties}$\Rightarrow$\ref{corollary trivial Bool} follows from the fact that $\Bool$ is a variety of universal algebras which is not a preorder. Finally, the implication \ref{corollary trivial Bool}$\Rightarrow$\ref{corollary trivial lex} follows immediately from Theorem~\ref{theorem: Ari implies every non trivial matrix} since $\Bool$ is a regular arithmetical category.
\end{proof}

%Bibliography

\end{document}